\theoremstyle{plain}
\newtheorem{thm}{Theorem}
\newtheorem{prop}[thm]{Proposition}
\newtheorem{nota}[thm]{Notation}
\newtheorem{rem}[thm]{Remark}
\newtheorem{defin}[thm]{Definition}
\newcommand{\R}{\mathbb{R}}
\newcommand{\N}{\mathbb{N}}
\def\multiset#1#2{\ensuremath{\left(\kern-.2em\left(\genfrac{}{}{0pt}{}{#1}{#2}\right)\kern-.2em\right)}}
\begin{document}

\title{Graphlike families of multiweights}
\author{Agnese Baldisserri, Elena Rubei}
\date{}
\maketitle

\begin{abstract}

Let ${\cal G}=(G,w) $
be a weighted graph, that is, a graph $G$ 
endowed with a function $w$ from the edge set of $G$ to the set of real numbers;
for any subset $S$ of the  vertex set  of $G$,  we define $D_S({\cal G})$ 
to be the minimum of the weights of the subgraphs of $G$ whose vertex set contains  $S$; 
we call $D_S({\cal G})$  a  multiweight of ${\cal G}$.

Let $X$ be a finite set and let $\{D_S\}_{S \subset X, \; \sharp S \geq 2}  $ be a family of positive real numbers. 
We find  necessary and sufficient conditions 
for the family  to be the family of multiweights of a positive-weighted graph with vertex set $X$. 
  Moreover we study the analogous problem  for trees.
Finally, 
we find a criterion to say if there exists a nonnegative-weighted tree
${\cal T}$  with leaf set $X$ and such that $D_S ({\cal T})=D_S $ for any 
$S \subset X$. 
\end{abstract}

\def\thefootnote{}
\footnotetext{ \hspace*{-0.36cm}
{\bf 2010 Mathematical Subject Classification: 05C05, 05C12, 05C22} 

{\bf Key words: weighted graphs, dissimilarity families} }

\section{Introduction}

For any graph $G$, let $E(G)$, $V(G)$ and $L(G)$ 
 be respectively the set of the edges,   
the set of the vertices and  the set of the leaves of $G$.
A {\bf weighted graph} ${\cal G}=(G,w)$ is a graph $G$ 
endowed with a function $w: E(G) \rightarrow \R$. 
For any edge $e$, the real number $w(e)$ is called the weight of the edge; 
for any subgraph $G'$ of  $G$, we denote by  $w(G')$ the sum of the weights 
of the edges of $G'$. 
If the weights of all the edges of $G$ are positive (respectively nonnegative), 
we say that the graph is {\bf positive-weighted} (respectively {\bf 
nonnegative-weighted}). If the weights of the internal edges are positive we say that the graph is {\bf internal-positive-weighted}, where an edge $e$ is said internal if there exists a path  with endpoints of degree greater than $2$ and containing $e$.

Throughout the paper we will consider only simple finite  connected graphs.

\begin{defin}
Let ${\cal G}=(G,w) $ be a positive-weighted graph. 
For any  $k$-subset  $S$ of $ V(G)$ with $k \geq 2$,
 we define $$ D_S({\cal G}) 
= min 
\{w(R) | \; R \text{ a connected subgraph of } G  \text{ such that } S \subset V(R) \}.$$ 
We call    $ D_S({\cal G})$  a {\bf multiweight}
 of ${\cal G}$ or, more precisely, a $k$-weight  of ${\cal G}$.
 If $R$ is a connected subgraph of $G$ such that $S \subset V(R)$ and $w(R) = D_S ({\cal G})$, we say that $R$ realizes $D_S({\cal G})$; observe that $R$ is necessarily a tree.  
 For simplicity, we denote $D_{\{i_1,..., i_k\}}({\cal G})$ by  $D_{i_1,..., i_k}({\cal G})$. 
\end{defin}

We can wonder when a family of positive real numbers is the family of multiweights of some graph or some tree.
Let  $n \in \N_{\geq 2}$ (the set of the natural numbers greater than or equal to $2$) and $A$ be a subset of $\{S \subset \{1,...., n\}| \; \sharp S \geq 2\}$. A family of positive real numbers
 $\{D_S\}_{S \in A}$  is said {\bf p-graphlike} (respectively  {\bf nn-graphlike}, {\bf ip-graphlike}) if 
there exists a positive-weighted (respectively nonnegative, internal-positive) graph  ${\cal G}=(G,w)$ with $\{1,...., n\} \subset V(G)$ 
such that $ D_{S}({\cal G}) = D_{S}$  for any 
$ S \in A$. If so, we  say that ${\cal G}$ {\bf realizes} the family $\{D_S\}_{S \in A}$.
The vertices $1,...,n$ are called {\bf labelled}.
 In the case the graph is a tree, we speak of
{\bf p-treelike} families, {\bf nn-treelike} families, {\bf ip-treelike} families. Finally, if there exists a positive-weighted tree (respectively a nonnegatve-weighted tree, internal-positive-weighted tree) ${\cal T}=(T,w)$ realizing the family and such that $\{1,...,n\} \subset L(T)$, we say that the family is {\bf p-l-treelike} (respectively  {\bf nn-l-treelike},   {\bf ip-l-treelike}).
Observe that a family of positive real numbers $\{D_S\}_{S \in A}$ is p-treelike if and only if it is nn-l-treelike.

Weighted graphs have applications in several disciplines, such as 
biology, psychology, archeology, engineering. Phylogenetic trees are weighted trees
whose vertices represent  species and the weight of an edge is given 
by how much the DNA sequences of the species represented by the vertices of the edge differ.  Weighted trees are used also to represent the evolution of languages or of manuscripts.
Weighted graphs can  represent hydraulic webs or
railway webs
where the weight of a line  is 
  the difference between the  earnings and the cost of the line or the 
  length of the line.
  It can be interesting, given a family ${\cal F}$ of real numbers,
 to wonder if there exists 
a weighted tree or, more generally, a weighted graph, with ${\cal F}$ as family of  multiweights.

There are several results about 
 families of $k$-weights of weighted graphs or trees with fixed $k$. One of the first is due to Hakimi and 
Yau: in 1965, they observed that a family of positive real numbers,
$\{D_{I}\}_{I \subset \{1,...,n\}, \sharp I = 2}$,
is p-graphlike if and only if the $D_I$ satisfy the 
triangle inequalities (see \cite{H-Y}).

In the same years, also a criterion for a family $\{D_{I}\}_{I \subset \{1,...,n\}, \sharp I = 2}$ to be p-treelike
was established, see \cite{B}, \cite{SimP}, \cite{Za}: 

\begin{thm} \label{Bune}
Let
$\{D_{I}\}_{I \subset \{1,...,n\}, \sharp I = 2}$ be a family of positive real numbers 
satisfying the triangle inequalities.
It is p-treelike (or nn-l-treelike) if and only if it satisfies the so-called $4$-point condition:

for all $a,b,c,d  \in \{1,...,n\}$,
the maximum of $$\{D_{a,b} + D_{c,d},D_{a,c} + D_{b,d},D_{a,d} + D_{b,c}
 \}$$ is attained at least twice. 
\end{thm}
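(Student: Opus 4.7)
My plan is to prove the two directions separately: necessity by direct case analysis on the minimal spanning subtree, and sufficiency by induction on $n$.

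\textbf{Necessity.} Suppose $\mathcal{T}=(T,w)$ is a positive-weighted tree realizing the family, and fix four labelled vertices $a,b,c,d$. Consider the minimal subtree $T_0 \subset T$ spanning $\{a,b,c,d\}$. Up to subdivision, $T_0$ has either a single interior branching node (a \emph{star} with four arms) or two interior branching nodes joined by an internal edge $e$ (a \emph{quartet} with a distinguished split of $\{a,b,c,d\}$ into two pairs). In the star case, each of the three sums $D_{ab}+D_{cd}$, $D_{ac}+D_{bd}$, $D_{ad}+D_{bc}$ equals the same total of four arm weights. In the quartet case with split $\{a,b\}\mid\{c,d\}$, the sum $D_{ab}+D_{cd}$ avoids $e$ while the other two sums each traverse $e$ twice, so those two are equal and strictly larger. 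In either case the maximum is attained at least twice.

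\textbf{Sufficiency, by induction on $n$.} For $n=3$, set $\ell_i := \tfrac{1}{2}(D_{ij}+D_{ik}-D_{jk})$, which is nonnegative by the triangle inequalities, and attach the three labels to a common Steiner point via edges of lengths $\ell_1,\ell_2,\ell_3$ (collapsing the point onto a label if its $\ell_i$ vanishes). For the inductive step, let $n \geq 4$ and apply the induction hypothesis to the restriction of the family to $\{1,\dots,n-1\}$, which still satisfies the 4-point and triangle conditions, to obtain a tree $\mathcal{T}'=(T',w')$ realizing the smaller family. The task is then to attach vertex $n$ to $T'$ at the correct location with the correct pendant weight.

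The key construction identifies the attachment point via the 4-point condition. Fix a label $a$ and, for each other $b \in \{1,\dots,n-1\}$, set $\mu_b := \tfrac{1}{2}(D_{ab}+D_{an}-D_{bn})$; in any would-be tree realizing the family, this is the distance from $a$ to the median of $\{a, b, n\}$, which lies on the $a$-to-$b$ path in $T'$. Taking $b^*$ to maximize $\mu_b$ identifies a point $p \in T'$ at distance $\mu^* := \mu_{b^*}$ from $a$ along the $a$-to-$b^*$ path, and we attach $n$ there via a pendant of weight $w_n := D_{an} - \mu^*$. The 4-point condition applied to every quadruple $\{a, b, c, n\}$ is what guarantees that this single choice of $p$ and $w_n$ is consistent with all the $D_{cn}$ and produces the correct distances in the extended tree.

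The main obstacle I anticipate is the bookkeeping in the inductive step: one must handle uniformly the subcases where $p$ lies in the interior of an edge of $T'$ (requiring subdivision), coincides with an existing branching vertex, or lands on a labelled vertex (forcing $w_n = 0$ or a degenerate topology). Once these cases are organized, the induction yields an nn-l-treelike realization, which by the equivalence stated after the definitions in Section~1 is the same as a p-treelike realization.
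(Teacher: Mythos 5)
This theorem is not proved in the paper at all: it is quoted as a classical result with citations to Buneman, Sim\~oes Pereira and Zaretskii, so there is no ``paper proof'' to match against. Your argument is, in substance, the standard proof from those references: necessity via the star/quartet dichotomy for the minimal subtree spanning four labelled points (with degenerate arm lengths absorbing the cases where a labelled point is internal, which does occur here since p-treelike only requires $\{1,\dots,n\}\subset V(T)$, not $\subset L(T)$), and sufficiency by sequential insertion using the Gromov products $\mu_b=\tfrac12(D_{a,b}+D_{a,n}-D_{b,n})$. The outline is correct, and the triangle inequalities do guarantee $0\le\mu^*\le D_{a,b^*}$ and $w_n\ge 0$ as needed. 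The one place where real work is asserted rather than done is the consistency check in the inductive step: for an arbitrary $c$, writing $\alpha_c$ for the distance from $a$ to the median of $\{a,b^*,c\}$ in $T'$, one must show that either $\alpha_c\ge\mu^*$ and then $\mu_c=\mu^*$, or $\alpha_c<\mu^*$ and then $\alpha_c=\mu_c$; both follow from the four-point condition on $\{a,b^*,c,n\}$ \emph{combined with} the maximality of $\mu_{b^*}$ (which forces $D_{a,b^*}+D_{c,n}\ge D_{a,c}+D_{b^*,n}$ and hence pins down which two of the three sums achieve the maximum). Since you correctly identify both ingredients, this is a sketch-level omission rather than a flaw. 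The only other loose end is the one you flag yourself: when $p$ lands on a labelled leaf of $T'$ one must re-hang that label on a zero-weight pendant to preserve the nn-l-treelike form, and positivity of the $D_I$ prevents the fully degenerate collapse $d(c,p)=w_n=0$.
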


For higher $k$ the literature is more recent. In \cite{P-S} Pachter and Speyer explained that the study of $k$-weights, with $k>2$, is important  beacause they are 
statistically more reliable than $2$-weights (see also \cite{SS2}).
   Moreover, they obtained an important result
about $k$-weights of positive-weighted trees with $n$ labeled leaves and $k \leq \frac{n+1}{2}$:

\begin{thm} {\bf (Pachter-Speyer)}. Let $ k ,n  \in \mathbb{N}$ with
 $3 \leq  k \leq  \frac{n+1}{2}$.  A positive-weighted tree
 ${\cal T}$ with leaves $1,...,n$ and no vertices of degree 2
is determined by the values $D_I({\cal T})$, where $ I \subset \{1,...,n\}, \sharp I = k $.
\end{thm}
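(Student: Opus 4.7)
The plan is to reduce the theorem to Theorem~\ref{Bune}, by showing that the family of $k$-weights of $\mathcal{T}$ determines all pairwise leaf distances $d(i,j):=D_{\{i,j\}}(\mathcal{T})$; once these are recovered, Theorem~\ref{Bune} yields a unique positive-weighted tree with leaf set $\{1,\dots,n\}$ and no degree-$2$ vertex realizing them, so $\mathcal{T}$ itself is reconstructed. The starting point is the split decomposition: each edge $e$ of $\mathcal{T}$ induces a bipartition $(A_e,B_e)$ of the leaf set, and the definitions give
$$D_I(\mathcal{T})=\sum_e w(e)\,\mathbf{1}\{I\cap A_e\neq\emptyset,\ I\cap B_e\neq\emptyset\},\qquad d(i,j)=\sum_e w(e)\,\mathbf{1}\{|\{i,j\}\cap A_e|=1\}.$$

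For the base case $k=3$ (so $n\geq 5$ by hypothesis) the minimal subtree on three leaves is a tripod, whence $2D_{a,b,c}(\mathcal{T})=d(a,b)+d(a,c)+d(b,c)$; expanding both sides yields
$D_{i,j,a}+D_{i,j,b}-D_{i,a,b}-D_{j,a,b}=d(i,j)-d(a,b)$
for any four distinct leaves. Summing this over the three pairs $\{a,b\}\subset\{p,q,r\}$ for a fixed triple $\{p,q,r\}\subset\{1,\dots,n\}\setminus\{i,j\}$ (available since $n\geq 5$) and using $d(p,q)+d(p,r)+d(q,r)=2D_{p,q,r}$ gives
$$3\,d(i,j)=2\bigl(D_{i,j,p}+D_{i,j,q}+D_{i,j,r}\bigr)-\!\!\!\sum_{\{a,b\}\subset\{p,q,r\}}\!\!\!\bigl(D_{i,a,b}+D_{j,a,b}\bigr)+2\,D_{p,q,r},$$
an explicit linear combination of triple-weights, so all pairwise distances are recovered when $k=3$. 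For $k\geq 4$, I would proceed by induction, showing that $(k-1)$-weights of $\mathcal{T}$ are determined by its $k$-weights. Writing $T_J$ for the minimal subtree realizing a $(k-1)$-subset $J$, the identity
$$\sum_{x\notin J}D_{J\cup\{x\}}(\mathcal{T})=(n-k+1)\,D_J(\mathcal{T})+\sum_{e\notin T_J}m_e\,w(e)$$
(where $m_e$ counts the labelled leaves outside $J$ lying on the side of $e$ that does not meet $T_J$) reduces the problem to cancelling the correction term $\sum_e m_e\,w(e)$; the hypothesis $n\geq 2k-1$ guarantees enough auxiliary $k$-subsets to form a signed linear combination that achieves this cancellation uniformly over all tree topologies, recovering $D_J(\mathcal{T})$ as an explicit function of the $k$-weights. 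Iterating from $k$ down to $2$ then yields all $d(i,j)$.

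The main obstacle is producing the explicit cancellation in the inductive step for general $k\geq 4$: the coefficients must depend only on $n$, $k$, $J$, not on the particular tree $\mathcal{T}$, and the correction terms must cancel identically on every tree topology. The combinatorial bookkeeping is nontrivial, and is essentially the content of Pachter--Speyer's key lemma. The bound $n\geq 2k-1$ is sharp: already for $k=3$, $n=4$ there are only $\binom{4}{3}=4$ triple-weights but $\binom{4}{2}=6$ pairwise distances to determine, so the theorem fails.
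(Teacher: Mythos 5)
The paper does not actually prove this statement: it is quoted as a known result and attributed to \cite{P-S}, so there is no internal proof to compare yours against, and I can only assess your argument on its own terms. Your overall strategy (recover the $2$-weights from the $k$-weights, then invoke uniqueness of the essential tree realizing a tree metric) is the standard and correct one, and your base case $k=3$ is complete and correct: the identity $2D_{a,b,c}(\mathcal{T})=d(a,b)+d(a,c)+d(b,c)$ is exactly formula (\ref{eq:primatree}) of Proposition \ref{lastprop}, and the linear combination you derive from it does recover every $d(i,j)$ once $n\geq 5$. One small caveat: Theorem \ref{Bune} as stated only asserts \emph{existence} of a realizing tree, so the uniqueness of the essential positive-weighted realization of a tree metric is a separate (though standard) fact that you should cite or prove before concluding that $\mathcal{T}$ is reconstructed.

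The genuine gap is the inductive step for $k\geq 4$. You correctly write the identity $\sum_{x\notin J}D_{J\cup\{x\}}(\mathcal{T})=(n-k+1)\,D_J(\mathcal{T})+\sum_{e\notin T_J}m_e\,w(e)$, but you then merely assert that the hypothesis $n\geq 2k-1$ ``guarantees enough auxiliary $k$-subsets to form a signed linear combination'' cancelling the correction term uniformly over all tree topologies. No such combination is exhibited, no argument is given for why one exists, and it is never explained where the bound $n\geq 2k-1$ actually enters the construction. Since the coefficients must not depend on the (unknown) tree while the quantities $m_e$ do depend on its topology, this cancellation is precisely the hard combinatorial content of the theorem; as you yourself concede, it ``is essentially the content of Pachter--Speyer's key lemma.'' Deferring to that lemma without proving it means that what you have actually established is only the comparatively easy case $k=3$. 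To close the gap you would need either to construct the signed combination explicitly and verify the cancellation edge by edge (classifying edges by how many labelled leaves lie on each side and which side meets $T_J$, which is where $n\geq 2k-1$ must be used), or to reproduce Pachter and Speyer's actual argument from \cite{P-S}.
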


Later, the study of the families of $k$-weights of weighted trees produced several other  results, see for example \cite{H-H-M-S}, \cite{L-Y-P}, \cite{B-R2} or \cite{B-R3}.  

The results we have quoted  are about families of $k$-weights with fixed $k$, but it can be interesting also 
to characterize the families of multiweights, that is the families of $k$-weights with $k$ varying in $\N_{\geq 2}$, of weighted graphs or trees. Some results in this sense are due to Bryant and Tupper, who,  in  \cite{B-T} and in \cite{B-T2}, discovered important properties regarding the  concept of diversity:

\begin{defin} A {\bf diversity} is  a pair $(X, \delta)$ where $X$ is a set and
$\delta$ is a function from the finite subsets of $X$ to $\R$ satisfying the following conditions:

\smallskip
(1) $\delta(A) \geq 0$ for any finite $A \subset X$; moreover  $\delta(A) = 0$ if and only if $\sharp A \leq 1$;

\smallskip
(2)  if $B \neq \emptyset$, then $\delta(A \cup C) \leq  \delta (A \cup B) + \delta(B \cup C)$
\smallskip
for all finite $A,B,C \subset X.$
\end{defin} 

It is easy to prove that conditions (1) and (2) imply that, if $A \subset B$, then $\delta (A) \leq \delta (B)$. Obviously, if ${\cal G}=(G,w)$ is a positive-weighted graph and we consider the pair $({\cal P}, \delta)$ where 
$${\cal P}=\{S \subset V(G) \, |\, \sharp S \geq 2\}$$
and, for any $S \in {\cal P}$,
$$\delta (S)=D_{S}({\cal G}),$$ 
we have that  $({\cal P}, \delta)$ is a diversity.

In this paper we study families of $k$-weights of positive-weighted graphs with $k$ varying in $ \N_{\geq 2}$. Precisely,
let $$\{D_S\}_{S \subset \{1,...,n\}, \; \sharp S \geq 2}  $$ be a family of
 positive real numbers; we find  necessary and sufficient conditions 
for it to be the family of multiweights of a positive weighted-graph or a positive-weighted tree with vertex set $\{1,..., n\}$, see respectively Theorem
\ref{thm1} and Theorem \ref{thm2}. 
Our results  are based on a proposition (Proposition \ref{firstprop}) 
that   relates the $k$-weights  of  a positive-weighted graph or tree for $k \geq 3$ to the $2$-weights. 
Finally we study the analogous problem  for nonnegative-weighted trees with set of leaves equal to $\{1,...,n\}$ (see Theorem \ref{thm3}).

\section{Notation}

\begin{nota} \label{notainiziali}

$ \bullet $ Throughout the paper, 
let $n \in \N $ with $ n \geq 2$; we denote by  $[n]$ the set $ \{1,..., n\}$.

$ \bullet $ For any set $S$ and $k \in \mathbb{N}$,  let ${S \choose k}$
be the set of the $k$-subsets of $S$ and let ${S \choose \geq k}$
be the set of the subsets of $S$ of cardinality greater than or equal to $k$.

$\bullet$ The words ``graph'' and ``tree''  denote respectively a finite graph and a finite tree.

 $\bullet $ 
Let  $\{D_{I}\}_{I \in A} $ with $A \subset {[n] \choose \geq 2}$ be a family of  real numbers. 
 For simplicity, we denote $D_{\{i_1,..., i_k\}}$ by  $D_{i_1,..., i_k}$. 
\end{nota}

\begin{nota} \label{cherries} 
Let T be a  tree.

$\bullet$ A {\bf node} of T is a vertex of degree greater than 2.

$\bullet$  Let $F$ be a leaf of $T$. Let $N$ be the node 
 such that the path $p$ between $N$ and $F$ does not contain any node apart from $N$. We say that $p$ is the {\bf twig} associated to $F$.
We say that an edge is {\bf internal} if it is not an edge of a twig.
It is easy to see that this definition is equivalent to the one we have given in the introduction.
We denote by $\mathring{E}(T)$ the set of the internal edges of $T$.

$\bullet $ We say that T is {\bf essential} if it has no vertices of degree $2$. 

$\bullet $ If $a$ and $b$ are vertices of T, we denote by $p(a,b)$ the path between $a$ and $b$.

$\bullet$ Let $S$ be a subset of $L(T)$. We denote by $T|_S$ the minimal subtree 
of $T$ whose vertex set   contains $S$. If ${\cal T}= (T,w)$ is a weighted tree, we denote by 
${\cal T}|_S$  the tree $T|_S$ with the weighting induced by $w$.
Let $\tilde{E} (T|_S) = \mathring{E}(T) \cap E(T|_S)$.
Observe that in general $\tilde{E} (T|_S) \neq \mathring{E} (T|_S)$, see Figure \ref{interni} for an example.

\begin{figure}[h!]
\begin{center}

\begin{tikzpicture} 
\draw [thick] (-2,0) --(2,0);
\draw [thick] (-1,0) --(-1,1);
\draw [thick] (0,0) --(0,1);
\draw [thick] (1,0) --(1,-1);
\node[below] at (-2, 0) {1};
\node[below] at (-1,0 ) {2};
\node[below] at (0,0 ) {3};
\node[above] at (1,0 ) {4};
\node[above] at (2, 0) {5};
\node[above] at (-1,1) {6};
\node[above] at (0,1) {7};
\node[below] at (1,-1) {8};
\end{tikzpicture}

\caption{Let $T$ be the tree in the figure and let $S=\{1,5,7,8\}$. The edge $\{2,3\}$ is in $\tilde{E}(T|_S)$ but not in $\mathring{E}(T|_S)$.   }
\label{interni}
\end{center}
\end{figure}
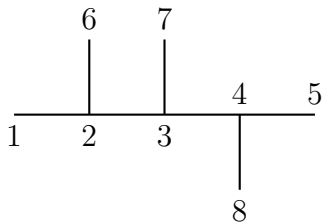

$\bullet $ 
We say that two leaves  $i$ and $j$ of $T$ are {\bf neighbours}
if in $p(i,j)$ there is only one node; 
furthermore, we say that  $C \subset L(T)$ is a {\bf cherry} if any $i,j \in C$ are neighbours.
The {\bf stalk} of a cherry is the unique node in the path 
with endpoints any two elements of the cherry.

$\bullet $ 
Let  $a,b,c,d \in L(T)$. We say that $ \langle
a, b | c, d \rangle $ holds if  in  $T|_{\{a,b,c,d\}}$ we have that $a$ and $b$ are neighbours, 
 $c$ and $d$ are neighbours, and  $a$ and $c$ are not neighbours; in this case we denote by  $\gamma_{a,b,c,d}$ the path 
between the stalk $s_{a,b}$ of $\{a,b\}$ and the stalk  $s_{c,d}$ of $ \{c,d\}$ in $T|_{\{a,b,c,d\}}$; we call it the {\bf bridge} of the $4$-subset $\{a,b,c,d\}$. The symbol 
$ \langle a,b \,| \, c, d \rangle $  is called {\bf Buneman's index} of $a,b,c,d$. 
\end{nota}

\section{Graphs and trees with all the vertices  labelled}

\begin{defin} For any  $X \subset {[n] \choose 2}$, 
 we define $G_{X}$ to be the graph such that $E(G_{X})=X$ and $V(G_{X})=\cup_{I \in X} I$.
\end{defin}

\begin{prop}\label{firstprop}
Let ${\cal G}=(G,w)$ be a positive-weighted graph with $V(G)=[n]$. For any $S \in {[n] \choose \geq 3}$, 
 we have:
$$D_{S}({\cal G})= 
\min_{
\stackrel{
 X \subset  {[n] \choose 2} \; \mbox{\scriptsize s.t.  $G_X$    tree  } }{ \mbox{\scriptsize  and $L(G_X) \subset S \subset V(G_X)$}}
}
\left\{ \sum_{I \in X} D_{I}({\cal G}) \right\}.
$$
\end{prop}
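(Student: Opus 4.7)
The plan is to verify the two inequalities separately, each by a direct construction.

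For the bound $D_S({\cal G}) \le \sum_{I \in X} D_I({\cal G})$ (which must hold for every admissible $X$), fix any $X \subset {[n] \choose 2}$ for which $G_X$ is a tree with $L(G_X) \subset S \subset V(G_X)$. For each $I = \{a,b\} \in X$ choose a connected subgraph $R_I \subset G$ realizing $D_I({\cal G})$; by the definition $R_I$ is necessarily a path from $a$ to $b$ of weight $D_I({\cal G})$. The union $R := \bigcup_{I \in X} R_I$ is then connected (because $G_X$ is) and $V(R) \supset V(G_X) \supset S$, so
$$D_S({\cal G}) \le w(R) \le \sum_{I \in X} w(R_I) = \sum_{I \in X} D_I({\cal G}),$$
the middle inequality being possibly strict if different $R_I$ share edges in $G$. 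Minimising over admissible $X$ gives $D_S({\cal G}) \le$ RHS.

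For the reverse inequality I would exhibit a single admissible $X$ that attains the bound. Pick any connected subgraph $R$ of $G$ with $S \subset V(R)$ and $w(R) = D_S({\cal G})$; $R$ is a tree by the definition. Because $w$ is positive, every leaf of $R$ must lie in $S$ --- otherwise pruning such a leaf and its incident edge would give a strictly lighter connected subgraph still containing $S$, contradicting the choice of $R$. Set $V' := S \cup \{v \in V(R) : \deg_R(v) \ge 3\} \subset [n]$; every vertex of $V(R) \setminus V'$ then has degree exactly $2$ in $R$, so $E(R)$ partitions into maximal paths $P_1, \dots, P_m$ whose endpoints lie in $V'$ and whose interior vertices do not. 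Let $X := \{\{a_i,b_i\} : P_i \text{ has endpoints } a_i, b_i\}$. Since $R$ is a tree the pairs $\{a_i,b_i\}$ are distinct, and $G_X$ is exactly the tree obtained from $R$ by contracting each $P_i$ to a single edge; in particular $V(G_X) = V' \supset S$ and $L(G_X) = L(R) \subset S$, so $X$ is admissible. Each $P_i$ is a connected subgraph of $G$ containing $\{a_i,b_i\}$, hence $D_{\{a_i,b_i\}}({\cal G}) \le w(P_i)$, and
$$\sum_{I \in X} D_I({\cal G}) \le \sum_{i=1}^m w(P_i) = w(R) = D_S({\cal G}).$$

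The step I expect to be delicate is the combinatorial decomposition in the second direction: one has to phrase the passage from $R$ to $X$ precisely enough to be sure that $G_X$ really is a tree with vertex set $V'$ and leaf set $L(R)$. Once that is set up, the rest is routine and uses only the positivity of $w$ (to force $L(R) \subset S$) together with the observation that $D_{\{a,b\}}({\cal G})$ is nothing but the minimum weight of an $a$-$b$ path in $G$.
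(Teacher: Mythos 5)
Your argument is correct, and the two halves relate to the paper's proof differently. The upper bound ($D_S({\cal G})\le\sum_{I\in X}D_I({\cal G})$ for every admissible $X$, obtained from the union of paths realizing the $2$-weights) is exactly the paper's argument. For the lower bound you take a genuinely different route. The paper fixes a minimal tree $T$ realizing $D_S({\cal G})$ and shows by an exchange argument that $w(I)=D_I({\cal G})$ for \emph{every} edge $I$ of $T$ (a strictly lighter path between the endpoints of $I$ would yield a lighter connected subgraph still containing $S$), so that $X=E(T)$ is admissible and its sum equals $D_S({\cal G})$ on the nose. You instead suppress the degree-$2$ vertices of your minimal tree that lie outside $S$, let $X$ consist of the endpoint pairs of the resulting maximal paths $P_i$, and use only the one-sided bound $D_{\{a_i,b_i\}}({\cal G})\le w(P_i)$, obtaining $\sum_{I\in X}D_I({\cal G})\le D_S({\cal G})$; equality then follows for free from the upper bound. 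Your route avoids the replacement argument but proves slightly less (the paper's version additionally tells you that in a minimizing tree every edge is itself a minimum-weight connection between its endpoints). Note also that the path-contraction is more bookkeeping than you need: taking $X$ to be the full edge set of your minimal tree already works, since $D_I({\cal G})\le w(I)$ for each edge $I$, the vertex set contains $S$, and the leaf set is contained in $S$ by your pruning observation. That pruning observation (every leaf of a minimizing tree lies in $S$, by positivity of $w$) is exactly what makes the chosen $X$ admissible; the paper leaves it implicit when it asserts that $E(T)$ belongs to the family of admissible sets, so making it explicit is a genuine improvement in rigor.
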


\begin{proof} Let us fix $S \in {[n] \choose \geq 3}$ and
 define 
$$ R = \left\{X \subset  {[n] \choose 2} | \;\; G_X \textrm{  a tree and } 
L(G_X) \subset S \subset V(G_X) \right\}.$$ 
Let $T$ be a connected subtree of $G$ realizing $D_S({\cal G})$.
Then, obviously, 
\begin{equation}\label{DS} 
D_{S}({\cal G})= w(T) = \sum_{I \in E(T)} w(I). 
\end{equation}

We want to prove that
\begin{equation} \label{wI}
w(I)=D_{I}({\cal G})
\end{equation}
for any $I \in E(T)$.
Obviously, 
$w(I) \geq D_I({\cal G})$; moreover,  
if, contrary to our claim, we had that  $w(I) > D_I({\cal G})$ for some $I \in E(T)$, then it would exist a path $p_I$ from one element of $I$ to the other, different from the 
edge $I$, and  such that $D_I({\cal G})=w(p_I)$; let $H$ be  the connected subgraph of $G$ obtained from $T$ replacing the edge $I$ with $p_{I}$;  we would have that 
$S\subset V(H)$ and  $w(H)<w(T)=D_{S}({\cal G})$, which is absurd.
From (\ref{DS}) and (\ref{wI})  we get
$$
D_{S}({\cal G})= \sum_{I \in E(T)} D_{I}({\cal G}).$$
From the equation above and the fact that  $E(T) \in R$, we get that 
$$D_{S}({\cal G}) \geq 
\min_{X \in R}
\left\{ \sum_{I \in X} D_{I}({\cal G}) \right\}.
$$


Let us prove the other inequality; suppose $X \in R$;  for any $I \in X$, let $p_I$ be a path in ${\cal G}$ realizing $D_{I}({\cal G})$. We have that
$$ D_{S}({\cal G})\leq w \left( \bigcup_{I \in X} p_I \right) \leq  \sum_{I \in X} w(p_I) = \sum_{I \in X} D_{I}({\cal G}),$$
where the first inequality holds because  $\bigcup_{I \in X} p_I$ is a connected graph and its vertex set  contains $S$. 
\end{proof}

\begin{thm}\label{thm1}
Let $\{D_{I}\}_{I \in {[n] \choose \geq 2 }}$ be a family of positive real numbers. There exists a positive-weighted graph ${\cal G}=(G,w)$, with $V(G)=[n]$, such that $D_I({\cal G})=D_I$ for any $I \in {[n] \choose \geq 2}$ if and only if the following two conditions hold:

\begin{itemize}
\item[$(i)$] $D_{i,j} \leq D_{i,k}+D_{j,k}$ for any $i,j,k \in [n]$;

\item[$(ii)$] for any $S \in {[n] \choose \geq 3}$ we have that 
$$
D_{S}=
\min_{
\stackrel{
 X \subset  {[n] \choose 2} \; \mbox{\scriptsize s.t.  $G_X$    tree  } }{ \mbox{\scriptsize  and $L(G_X) \subset S \subset V(G_X)$}}
}
\left\{ \sum_{I \in X} D_{I} \right\}.
$$
\end{itemize}

\end{thm}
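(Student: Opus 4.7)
The plan is to handle the two directions separately, with the forward direction being essentially free and the converse reducing to a clean construction.

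For the forward direction, assume $\mathcal{G}=(G,w)$ realizes the family. Condition $(i)$ is the classical triangle inequality for the shortest-connection $2$-weights, and can be verified directly: the union of a subgraph realizing $D_{i,k}(\mathcal{G})$ and one realizing $D_{j,k}(\mathcal{G})$ is a connected subgraph containing $\{i,j\}$, so its weight is at least $D_{i,j}(\mathcal{G})$. Condition $(ii)$ is exactly Proposition \ref{firstprop}.

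For the converse, the natural candidate realizing graph is $G=K_{n}$, the complete graph on $[n]$, with weighting $w(\{i,j\}) := D_{i,j}$. I would verify that this $\mathcal{G}=(G,w)$ realizes the family in two steps. First, I would show that $D_{\{i,j\}}(\mathcal{G}) = D_{i,j}$ for every pair: the single edge $\{i,j\}$ gives $D_{\{i,j\}}(\mathcal{G}) \leq D_{i,j}$, and iterating the triangle inequality $(i)$ along any path from $i$ to $j$ shows that every such path has total weight at least $D_{i,j}$, giving the reverse inequality. Second, for any $S \in \binom{[n]}{\geq 3}$, I would apply Proposition \ref{firstprop} to $\mathcal{G}$ to obtain
$$
D_{S}(\mathcal{G}) \;=\; \min_{\substack{X \subset \binom{[n]}{2} \\ G_{X}\text{ tree},\; L(G_{X})\subset S \subset V(G_{X})}} \sum_{I \in X} D_{I}(\mathcal{G}) \;=\; \min_{X} \sum_{I \in X} D_{I} \;=\; D_{S},
$$
where the second equality uses step one and the third is exactly condition $(ii)$.

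I do not anticipate a serious obstacle: the main point is recognising that $(ii)$ is strong enough to reduce the problem entirely to the $2$-weights, so that once $K_{n}$ with the tautological weighting reproduces the $D_{i,j}$ as $2$-weights (which $(i)$ guarantees), all higher multiweights are forced to agree by Proposition \ref{firstprop}. The only care needed is to check that the set $R$ of index sets $X$ in Proposition \ref{firstprop} is nonempty for every $S$, which holds because any spanning tree of $S$ in $K_n$ (viewed as a set of pairs) belongs to $R$.
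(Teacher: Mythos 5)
Your proposal is correct and follows essentially the same route as the paper: both directions hinge on Proposition \ref{firstprop}, and the converse uses the identical construction, namely the complete graph on $[n]$ with $w(\{i,j\})=D_{i,j}$, where condition $(i)$ guarantees the $2$-weights are reproduced and condition $(ii)$ then forces all higher multiweights to agree. The extra details you supply (the path argument for the $2$-weights and the nonemptiness of the index set $R$) are just explicit versions of steps the paper leaves to the reader.
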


\begin{proof} $\Longrightarrow$
Suppose that there exists a positive-weighted graph ${\cal G}=(G,w)$, with $V(G)=[n]$, such that $D_I({\cal G})=D_I$ for any $I \in {[n] \choose \geq 2}$. It is well known and easy to prove  that condition $(i)$ holds. Condition $(ii)$ follows from  Proposition \ref{firstprop}.

$\Longleftarrow$
Let $\{D_{I}\}_{I \in {[n] \choose \geq 2 }}$ be a family of positive real numbers satisfying conditions $(i)$ and $(ii)$; we can construct a positive-weighted graph ${\cal G}=(G,w)$ in the following way: let $G$ be the complete graph  with $n$ vertices, and let the weight of the edge  $\{i, j\}$ be equal to $D_{i,j}$. We have that $D_I ({\cal G})=D_I$ for any $I \in {[n] \choose 2}$ by (i). We have to prove that $D_{S}({\cal G})=D_{S}$ for any $S \in {[n] \choose \geq 3}$. By Proposition \ref{firstprop} we know that 
$$ D_{S}({\cal G})= \min_{X \in R} \left\{ \sum_{I \in X} D_{I}({\cal G}) \right\}, $$
where $R=\{X \subset {[n] \choose 2} \, | \,
\;\; G_X \textrm{  a tree and } L(G_X) \subset S \subset V(G_X)
\}$, and by assumption (ii) we have that 
$$D_{S}= \min_{X \in R} \left\{ \sum_{I \in X} D_{I} \right\}.$$
So we get the desired result, since we have already proved that $D_{I}({\cal G})=D_I$ for any $I \in {[n] \choose 2}$.
\end{proof}


Now, we want  to characterize  the families of  multiweights of positive-weighted trees.  First, we need to introduce a  definition  and to state a  theorem characterizing the families $\{D_I \}_{I  \in {n \choose 2}}$ that  are the families of $2$-weights of 
positive-weighted trees with $[n]$ as vertex set (that is, with all the vertices labelled).
The theorem, probably well-known to experts,   was suggested to us by an anonymous referee  in October 2014 as a simplification of a similar but more complicated criterion;   later we have found it also
in \cite{H-F}; we give here a shorter proof.

\begin{defin}\label{medianfamily}
Let $ n \geq 3$ and let $\{D_I\}_{I \in {[n] \choose 2}}$ be a set of positive real numbers. We say that the family $\{D_I\}$ is a {\bf median family} if, for any $a,b,c \in [n]$, there exists a unique element $m \in [n]$ such that 
$$D_{i,j}=D_{i,m}+D_{j,m}$$
for any distinct $i,j \in \{a,b,c\}.$
\end{defin}

Observe that a median family satisfies the triangle inequalities.

\begin{thm}\label{thmmedian} Let $ n \geq 3$ and
let $\{D_I\}_{I \in {[n] \choose 2}}$ be a family of positive real numbers. There exists a positive-weighted tree ${\cal T}=(T,w)$, with $V(T)=[n]$, such that $D_I(\cal{T})=D_ I$ for all $I \in {[n] \choose 2}$ if and only if the $4$-point condition holds and  the family $\{D_I\}_I$ is median.
\end{thm}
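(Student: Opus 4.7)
The forward direction is quick. If ${\cal T}=(T,w)$ is a positive-weighted tree with $V(T)=[n]$ realising the $D_I$, the $4$-point condition follows from Theorem \ref{Bune}. For the median property, given $a,b,c\in[n]$ I take $m$ to be the unique point of $T$ lying on each of the three paths $p(a,b)$, $p(a,c)$, $p(b,c)$ (the Steiner point of $\{a,b,c\}$); the identities $D_{i,j}=D_{i,m}+D_{j,m}$ are then immediate, and since $V(T)=[n]$ this point is in $[n]$. Uniqueness: any candidate $m'$ satisfying those identities must lie on all three of those paths, hence coincide with the Steiner point; two distinct vertices of a positive-weighted tree occupy distinct points, so $m'=m$.

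For the backward direction, since a median family automatically satisfies the triangle inequalities, the $4$-point condition lets me invoke Theorem \ref{Bune} in its nn-l-treelike form: there is a nonnegative-weighted tree $T_0$ realising the $D_I$ with $[n]\subset L(T_0)$. I may assume $T_0$ is essential and that every internal edge of $T_0$ has positive weight, by contracting any zero-weight internal edges (this preserves distances and, since both endpoints of such an edge are internal of degree $\geq 3$, also essentiality). Consequently the only edges of $T_0$ that can carry weight zero are twigs. The plan is to use the median hypothesis to attach each internal vertex of $T_0$ to a unique labelled leaf at distance zero, and then contract the resulting zero-weight twigs.

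Here is the main step. Fix an internal vertex $v\in V(T_0)\setminus[n]$. Since $\deg(v)\geq 3$, I can choose leaves $a,b,c\in[n]$ in three distinct components of $T_0\setminus\{v\}$, so that $v$ is the tree-Steiner point of $\{a,b,c\}$. The median hypothesis supplies a unique $m\in[n]$ with $D_{i,j}=D_{i,m}+D_{j,m}$ for every pair in $\{a,b,c\}$; translated back into $T_0$, these identities say that $m$ lies on each of the three paths $p(a,b)$, $p(a,c)$, $p(b,c)$, which forces $d_{T_0}(m,v)=0$. Because internal edges are positive and $m$ is a leaf, this is possible only if the twig of $m$ is the edge $\{m,v\}$ with weight zero. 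The assignment $v\mapsto m_v:=m$ is moreover injective: if $m_{v_1}=m_{v_2}=m$, then $d_{T_0}(v_1,v_2)\leq d(v_1,m)+d(m,v_2)=0$, contradicting that $v_1$ and $v_2$ are joined in $T_0$ by a path of positive-weight internal edges.

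To conclude, I contract every zero-weight twig $\{m_v,v\}$, identifying the leaf $m_v$ with $v$. All pairwise distances are preserved, the internal edges remain positive, the surviving twigs already had positive weight, and every remaining vertex carries a distinct label from $[n]$ (each internal vertex is absorbed into its $m_v$, and injectivity keeps the labels distinct). The result is a positive-weighted tree on $[n]$ realising $\{D_I\}$. The main obstacle — and the point where the median hypothesis is decisive — is precisely the key step above: the $4$-point condition alone only gives $[n]\subset L(T_0)$, whereas the median condition is exactly what is needed to guarantee that every internal node of the Buneman tree is covered by a labelled leaf at distance zero, so that $[n]=V(T_0)$ can actually be arranged.
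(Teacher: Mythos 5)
Your argument is correct in substance, but it reaches the conclusion by a noticeably different route from the paper's. The paper invokes Theorem \ref{Bune} in its p-treelike form, so it starts from a \emph{positive}-weighted tree $T$ with $[n]\subset V(T)$ (and with unlabelled degree-$1$ and degree-$2$ vertices already removed) and only has to show that every node of $T$ is labelled: for a node $m$ it picks three leaves $a,b,c$ separated by $m$, observes that $m$ satisfies the three median identities, and uses positivity of the edge weights to show that the unique $z\in[n]$ supplied by the median hypothesis can sit nowhere in $T$ except at $m$, whence $m=z\in[n]$. You instead invoke the nn-l-treelike form, which forces you to work in a nonnegative-weighted tree, to locate for each internal node a labelled leaf at tree-distance zero, and then to contract the resulting zero-weight twigs. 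The use of the median hypothesis is the same in both arguments (the median element of a separated triple must coincide with, or be at distance zero from, the Steiner point of that triple), but the paper's choice of starting tree makes all the zero-weight bookkeeping unnecessary; what your version buys is that it works verbatim if one only has the leaf-labelled form of Buneman's theorem at hand.

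Two places in that bookkeeping should be tightened. First, in a nonnegative-weighted tree the identity $D_{a,b}=D_{a,m}+D_{b,m}$ does not literally place $m$ on the path $p(a,b)$; it says that the path from $m$ to its projection onto $p(a,b)$ has weight zero. Your conclusion $d_{T_0}(m,v)=0$ is still correct, because the projection of $m$ onto at least one of the three paths is the Steiner point $v$ itself, but the sentence as written, read literally, would make the leaf $m$ equal to the internal vertex $v$. Second, the claim that ``the surviving twigs already had positive weight'' is not automatic: a priori a node $N$ could carry a second zero-weight twig ending at a labelled leaf $x\neq m_N$, and that twig would survive your contraction. Positivity of the family rules this out in one line, since such an $x$ would give $D_{x,m_N}=d(x,N)+d(N,m_N)=0$, contradicting $D_{x,m_N}>0$. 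With these two points made explicit (together with the routine pruning of unlabelled leaves of $T_0$ before choosing $a,b,c$ in the three components of $T_0\setminus\{v\}$), the argument is complete.
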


\begin{proof} 
$\Longrightarrow $ 
Obvious. 

$\Longleftarrow $
  Since the $4$-point condition holds, then, by Theorem \ref{Bune}, 
there exists a positive-weighted tree $\cal{T}=(T,w)$ such that $[n] \subset V(T)$ and $D_I(\cal{T})=D_I$ for all $I \in {[n] \choose 2}$. Obviously we can suppose that the vertices of $T$ of degree $1$ or $2$ are elements of $[n]$. 
We want to prove that $V(T)=[n]$. Let $m \in V(T)$ such that $deg(m)\geq3.$ This implies that there exist three distinct leaves, $a,b,c \in[n]$, such that $T|_{\{a,b,c\}}$ is a star with center the vertex $m$. We have that:
$$D_{i,j}(\cal{T})=D_{i,m}(\cal{T})+D_{j,m}(\cal{T})$$
for any $i,j \in \{a,b,c\}.$ Moreover, by assumption, there exists a unique element $z \in [n]$ such that:
\begin{equation} \label{med}
D_{i,j}=D_{i,z}+D_{j,z}
\end{equation}
for any $i,j \in \{a,b,c\},$ that is,
$$D_{i,j}(\cal{T})=D_{i,z}(\cal{T})+D_{j,z}(\cal{T})$$
for any $i,j \in \{a,b,c\}.$

Then, $z=m$; in fact, if $z \in V(T|_{\{a,b,c\}})$ and $z \neq m$, then $z$ would be a vertex of   the path between $m$ and one of the three leaves, suppose for example $a$; this would imply that
$D_{b,c}\neq D_{b,z}+D_{c,z},$
which is absurd by (\ref{med}). On the other hand, if $z \notin V(T|_{\{a,b,c\}})$, similarly we would obtain that 
$D_{i,j}\neq D_{i,z}+D_{j,z}$
for any $i,j \in \{a,b,c\},$ which is absurd. So $z=m$; therefore   $m \in [n]$, as we wanted to prove.
\end{proof}

\begin{thm}\label{thm2}
Let $ n \geq 3$ and let $\{D_I\}_{I \in {[n] \choose \geq 2}}$ be a family of positive real numbers. There exists a positive-weighted tree ${\cal T}=(T,w)$, with $V(T)=[n]$, such that $D_I({\cal T})=D_I$ for any $I \in {[n] \choose \geq 2}$ if and only if the following three conditions hold:

\begin{itemize}

\item[$(i)$] the family $\{D_I\}_{I \in {[n] \choose 2}}$ satisfies the $4$-point condition;
\item[$(ii)$] the family $\{D_I\}_{I \in {[n] \choose 2}}$ is median;
\item[$(iii)$] for any $S \in {[n] \choose \geq 3}$,  we have:
\begin{equation} \label{eq:thm3}
D_{S}= 
\min_{
\stackrel{
 X \subset  {[n] \choose 2} \; \mbox{\scriptsize s.t.  $G_X$    tree  } }{ \mbox{\scriptsize  and $L(G_X) \subset S \subset V(G_X)$}}
} 
\left\{ \sum_{I \in X} D_{I} \right\}.
\end{equation}
 
\end{itemize}

\end{thm}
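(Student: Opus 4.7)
The plan is to derive both directions of Theorem \ref{thm2} by combining the two major tools already established: Proposition \ref{firstprop}, which reduces the $k$-weights of a weighted graph (in particular a tree) to a minimum over tree-supported subsets of $2$-weights, and Theorem \ref{thmmedian}, which characterizes when a family of $2$-weights alone comes from a positive-weighted tree with all vertices in $[n]$ labelled.

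For the $(\Rightarrow)$ direction, I would start from the existence of $\mathcal{T}=(T,w)$ with $V(T)=[n]$ realizing the full family. Conditions $(i)$ and $(ii)$ then fall out immediately: the $4$-point condition follows from the easy direction of Theorem \ref{Bune} applied to the restricted $2$-weights $\{D_I(\mathcal{T})\}_{I \in {[n] \choose 2}}$, and the median property is the necessary condition in Theorem \ref{thmmedian}, since $V(T)=[n]$ forces each internal meeting vertex for a triple of leaves to itself be in $[n]$. Condition $(iii)$ is then a direct instance of Proposition \ref{firstprop} read with $G=T$.

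For the $(\Leftarrow)$ direction, assume $(i)$, $(ii)$, $(iii)$. Applying Theorem \ref{thmmedian} to the subfamily $\{D_I\}_{I \in {[n] \choose 2}}$ produces a positive-weighted tree $\mathcal{T}=(T,w)$ with $V(T)=[n]$ and $D_I(\mathcal{T})=D_I$ for every $2$-subset $I$. I then view $\mathcal{T}$ as a positive-weighted graph with $V(T)=[n]$ and invoke Proposition \ref{firstprop} to obtain, for any $S \in {[n] \choose \geq 3}$,
\[
D_S(\mathcal{T}) \;=\; \min_{X \in R}\, \sum_{I \in X} D_I(\mathcal{T}) \;=\; \min_{X \in R}\, \sum_{I \in X} D_I,
\]
where $R$ is the set of $X \subset {[n] \choose 2}$ such that $G_X$ is a tree with $L(G_X)\subset S \subset V(G_X)$. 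By hypothesis $(iii)$ this last quantity equals $D_S$, finishing the proof.

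There is no substantial obstacle once Proposition \ref{firstprop} and Theorem \ref{thmmedian} are in hand: the theorem is essentially a clean synthesis of those two results, with $(ii)$ being exactly the extra ingredient needed to upgrade a generic positive-weighted tree on $[n]$ (given by the $4$-point condition) to one with vertex set \emph{equal} to $[n]$, so that Proposition \ref{firstprop} then transmits $(iii)$ automatically into equality of the higher multiweights.
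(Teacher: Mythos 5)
Your proposal is correct and follows essentially the same route as the paper: both directions combine Theorem \ref{thmmedian} (to produce, or certify, a positive-weighted tree with vertex set exactly $[n]$ realizing the $2$-weights) with Proposition \ref{firstprop} (to transfer condition $(iii)$ into equality of the higher multiweights), exactly as in the paper's argument, which refers back to the proof of Theorem \ref{thm1} for the same final step. No gaps.
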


\begin{proof}
$\Longrightarrow $
 Condition $(i)$ is satisfied by Theorem \ref{Bune},
 condition $(ii)$ is satisfied by Theorem \ref{thmmedian}  and, finally, Proposition \ref{firstprop} assures us of the last condition. 
 
 $\Longleftarrow$
 Let $\{D_I\}_{I \in {[n] \choose \geq 2}}$ be a family of positive real numbers satisfying conditions (i), (ii) and (iii). Suppose that ${\cal T}=(T,w)$ is a positive-weighted tree such that $V(T)=[n]$ and $D_{I}({\cal T})=D_I$ for any $I \in {[n] \choose 2}$ (such a tree exists by Theorem \ref{thmmedian} and conditions (i) and (ii)); we can prove that $D_{S}({\cal T})= D_{S}$ for any $S \in {[n] \choose \geq 3}$ arguing  as in the proof of Theorem \ref{thm1}.
\end{proof}

\section{Trees with labels only on the leaves}

In order to study the families of multiweights of nonnegative-weighted trees with set of leaves equal to $[n]$, we need  to recall some notation and some facts from \cite{B-R1}.

\begin{defin}
 Let $T$ be a tree and let $a,b,c,d ,x \in L(T)$.
Let $S$ be a subtree of $T|_{a,b,c,d}$. 

Let $\tilde{x}$ be the vertex such that $p(x,\tilde{x}) 
$ is the minimal path whose union with  $T|_{a,b,c,d}$ is connected;
we say that   $x$ {\bf clings} to $T|_{a,b,c,d}$ in $S$ if 
$\tilde{x} \in V(S) $.


\end{defin}

See Figure \ref{clings} for an example: let $T$ be the tree in the 
figure and let $S=p(a,b)$.
\begin{figure}[h!]
\begin{center}

\begin{tikzpicture} 
\draw [thick] (0,0) --(1,0);
\draw [thick] (0,0) --(-1,0);
\draw [thick] (1,0) --(2,1);
\draw [thick] (1,0) --(2,-1);
\draw [thick] (-1,0) --(-2,1);
\draw [thick] (-1,0) --(-2,-1);
\draw [thick] (-1.5,-0.5) --(-2.5,-0.8);
\draw [thick] (1.3,0.3) --(1.4,1);
\draw [thick] (-0.5,-0.8) --(-0.2,0);
\draw [thick] (-0.1,-0.8) --(-0.3,-0.3);
\node[above] at (-2, 1) {$a= \tilde{a}$};
\node[below] at (-2,-1 ) {$b= \tilde{b}$};
\node[above] at (2.3, 1) {$c=\tilde{c}$};
\node[below] at (2.3,-1 ) {$d= \tilde{d}$};
\node[left] at (-2.5,-0.8) {$x$};
\node[right] at (-1.5,-0.5) {$\tilde{x}$};
\node[below] at (0,-0.8) {$z$};
\node[above] at (-0.2,0) {$\tilde{z}$};
\end{tikzpicture}

\caption{the leaves $x,a,b$ cling   to $T|_{a,b,c,d}$ in  $S:=p(a,b)$, while $z,c,d$ do not cling  to $T|_{a,b,c,d}$ in  $S$}
\label{clings}
\end{center}
\end{figure}
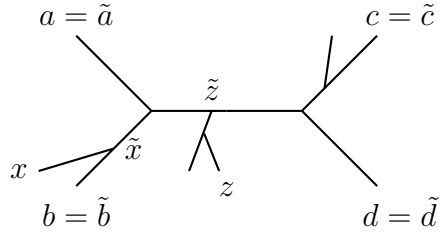

\begin{defin} \label{defL} Let $\{D_I\}_{ I \in { [n] \choose 2}}$be a family
of real numbers.  For any distinct $a,b,c,d \in [n]$, 
let 
$$ L^{\{a,b,c,d\}}_{\{a,b\}} =\left\{x \in [n]-\{a,b,c,d\} \; \bigg| \;   \begin{array}{ll} \mbox{ either } & D_{x,z} - D_{a,z}  \mbox{ does not depend on } z \in \{b,c,d\}  
\\ \mbox{ or } &   D_{x,z} - D_{b,z}  \mbox{ does not depend on } z \in 
\{a,c,d\} \end{array}
\right\} \cup \{a,b\}.$$
We will denote $ L^{\{a,b,c,d\}}_{\{a,b\}}$ simply by $ L^{a,b,c,d}_{a,b}$ and 
we will omit the superscript  when the $4$-set 
which we are referring to is clear from the context.
\end{defin}

The definition above seems rather  obscure, but the following proposition and example will clarify it.

\begin{prop} \label{bridge}
Let ${\cal T} =(T,w) $ be an essential internal-positive-weighted tree. Denote  $D_{i,j} ({\cal T})$ by  $D_{i,j} $  for distinct $i,j \in L(T)$. Let $a,b,c,d \in L(T)$. 

1) If $\langle a,b \, |\, c,d \rangle $  holds,  we have that $L^{a,b,c,d}_{a,b}$ 
is the set of the elements $x$ of $L(T)$ clinging
 to $T|_{a,b,c,d}$  in  $p(a,b)$ and
$L^{a,b,c,d}_{c,d}$ is the set of the elements 
$x$ of $L(T)$ clinging  to $T|_{a,b,c,d}$  in  $p(c,d)$. 

2) We have that 
$\langle a,b \, |\, c,d \rangle $ holds and  the bridge of $(a,b,c,d)$ is given by exactly one   edge if and only if the following conditions hold:

(i)  $$D_{a,b} + D_{c,d} <
 D_{a,c} + D_{b,d} = D_{a,d}+ D_{b,c};$$ 

(ii) $L_{a,b} \cup L_{c,d}=  L(T)$.

\end{prop}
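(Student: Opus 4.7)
My plan for this proposition splits naturally into its two parts, with Part 1 feeding directly into Part 2.

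For Part 1, I fix the configuration $\langle a,b|c,d\rangle$, so that $T|_{a,b,c,d}$ is the caterpillar obtained by joining the cherry $\{a,b\}$ at its stalk $s_{a,b}$ to the cherry $\{c,d\}$ at its stalk $s_{c,d}$ via the bridge $p(s_{a,b}, s_{c,d})$. For $x \in L(T) \setminus \{a,b,c,d\}$ the vertex $\tilde{x}$ belongs to $V(T|_{a,b,c,d})$ and, since $a,b,c,d$ are leaves of $T$, differs from each of them. My argument is then a case analysis on the location of $\tilde{x}$, combined throughout with the identity $D_{y,z}({\cal T}) = w(p(y,z))$. If $\tilde{x} \in V(p(a,s_{a,b}))$, then for each $z \in \{b,c,d\}$ the paths $p(x,z)$ and $p(a,z)$ share the common suffix $p(\tilde{x},z)$, so $D_{x,z} - D_{a,z} = D_{x,\tilde{x}} - D_{a,\tilde{x}}$ independently of $z$, and the first condition defining $L_{a,b}$ holds; symmetrically, $\tilde{x} \in V(p(b,s_{a,b}))$ gives the second condition (the case $\tilde{x}=s_{a,b}$ satisfies both). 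In all other cases --- $\tilde{x}$ strictly interior to the bridge, or on $V(p(c,s_{c,d}))$, or on $V(p(d,s_{c,d}))$, or at $s_{c,d}$ --- a direct computation shows that expressions such as $(D_{x,b}-D_{a,b}) - (D_{x,c}-D_{a,c})$ reduce, after cancellation, to twice the weight of a nontrivial subpath of $p(\tilde{x}, s_{a,b})$ that contains at least one edge of the bridge. Every edge on such a subpath lies between two nodes of $T$, hence is internal and carries strictly positive weight, so the expression is nonzero; the analogous computation rules out the second defining condition as well. This gives the equivalence $x \in L_{a,b} \Leftrightarrow \tilde{x} \in V(p(a,b))$, and the statement for $L_{c,d}$ is symmetric.

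For Part 2, the forward direction is a short calculation. If the bridge is the single internal edge $e = \{s_{a,b}, s_{c,d}\}$, then $D_{a,c} + D_{b,d} = D_{a,d} + D_{b,c} = D_{a,b} + D_{c,d} + 2w(e)$ with $w(e) > 0$, giving (i); and $V(T|_{a,b,c,d}) = V(p(a,b)) \cup V(p(c,d))$, so by Part 1 every leaf of $T$ lies in $L_{a,b} \cup L_{c,d}$, giving (ii).

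For the converse, condition (i) combined with the $4$-point behavior of a positive-weighted tree forces the configuration $\langle a,b|c,d\rangle$ and identifies the total bridge length as $\tfrac{1}{2}(D_{a,c}+D_{b,d}-D_{a,b}-D_{c,d}) > 0$. Suppose, for contradiction, that the bridge contained more than one edge. Then there would be a vertex $v$ of $T$ in its interior; essentiality of $T$ forces $\deg_T(v) \geq 3$, so a branch of $T$ leaves $v$ away from the bridge and contains some leaf $y$ of $T$. By construction $\tilde{y} = v$, which lies neither in $V(p(a,b))$ nor in $V(p(c,d))$; Part 1 then yields $y \notin L_{a,b} \cup L_{c,d}$, contradicting (ii). Hence the bridge consists of a single edge.

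The main obstacle will be the bookkeeping in Part 1: one must carefully separate the subcases around $s_{a,b}$ (where both defining conditions of $L_{a,b}$ simultaneously hold) from the interior cases on the opposite side of the bridge, and verify that the discriminating difference is strictly positive using only the hypothesis that internal edges of $T$ carry positive weight. Once this is in place, the remaining arguments are direct distance computations.
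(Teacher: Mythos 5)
Your argument is correct. Note that the paper itself does not prove this proposition --- it defers entirely to \cite{B-R1} --- so there is no in-text proof to compare against; your write-up is the natural self-contained argument. The two points that genuinely need care are both handled properly: in Part 1 the discriminating quantity is $2\,w(p(\tilde{x},s_{a,b}))$, and its positivity rests on the observation that whenever $\tilde{x}\notin V(p(a,b))$ the vertex $\tilde{x}$ is itself a node of $T$ (it carries the branch toward $x$), so every edge of $p(\tilde{x},s_{a,b})$ lies between two nodes, is internal, and has positive weight; and in the converse of Part 2, essentiality is exactly what forces an interior vertex $v$ of a multi-edge bridge to have degree at least $3$, producing the leaf $y$ with $\tilde{y}=v$ that violates (ii) via Part 1. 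One should also record, as you implicitly do, that condition (i) rules out the star configuration and the other two Buneman indices because the corresponding bridge would consist of internal (hence positively weighted) edges, so the strict inequality in (i) pins down $\langle a,b\,|\,c,d\rangle$.
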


See \cite{B-R1} for the proof.

\bigskip

{\bf Example.} 
  Let ${\cal T}$ be the
tree represented in Figure \ref{es} with all the weights of the edges equal to $1$;
 consider the $4$-set $\{1,3,4,7\}$; we have that   $ \langle 1, 3 \,|\, 4, 7 \rangle $ holds, $L_{1,3}^{1,3,4,7}=\{1,2,3,9,10\}$ and $L_{4,7}^{1,3,4,7}=\{4,5,6,7,8\}$, so $L_{1,3}^{1,3,4,7} \cup L_{4,7}^{1,3,4,7}=[10]$ and $\gamma_{1,3,4,7}$ is composed by only one edge. Now consider the $4$-set $\{1,9,4,7\}$; we have that  $ \langle 1, 9 \,|\, 4, 7 \rangle $ holds, $L_{1,9}^{1,9,4,7}=\{1,2,9,10\}$ and $L_{4,7}^{1,9,4,7}=\{4,5,6,7,8\}$, so $L_{1,9}^{1,9,4,7} \cup L_{4,7}^{1,9,4,7} \neq [10]$ and in fact $\gamma_{1,9,4,7}$ is composed by more than one edge.

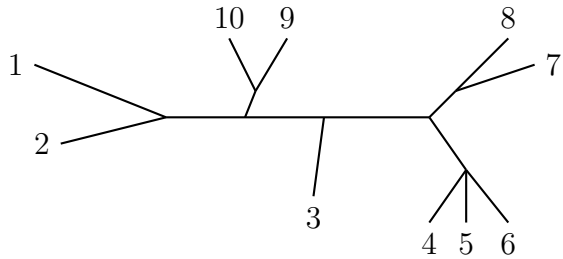
\begin{figure}[h!]
\begin{center}
\begin{tikzpicture}[scale=0.7]
\draw [thick] (0,0) --(5,0) ; 
\draw [thick] (0,0) --(-2.5,1);
\draw [thick] (0,0) --(-2,-0.5);
\draw [thick] (5,0) --(5.5,0.5); 
\draw [thick] (7,1) --(5.5,0.5); 
\draw [thick] (6.5,1.5) --(5.5,0.5); 
\draw [thick] (5,0) --(5.7,-1);
\draw [thick] (5,-2) --(5.7,-1);
\draw [thick] (6.5,-2) --(5.7,-1);
\draw [thick] (5.7,-2) --(5.7,-1);
\draw [thick] (1.5,0) --(1.7,0.5) ; 
\draw [thick] (1.2,1.5) --(1.7,0.5) ; 
\draw [thick] (2.3,1.5) --(1.7,0.5) ; 
\draw [thick] (3,0) --(2.8,-1.5) ;

\node [left] at (-2.5,1) {$1$};
\node [left] at (-2,-0.5) {$2$};
\node [above] at (6.5,1.5) {$8$};
\node [above] at (2.3,1.5) {$9$};
\node [above] at (1.2,1.5) {$10$};
\node [below] at (2.8,-1.5) {$3$};
\node [below] at (5,-2) {$4$};
\node [below] at (5.7,-2) {$5$};
\node [below] at (6.5,-2) {$6$};
\node [right] at (7,1) {$7$};

\end{tikzpicture}

\caption{An example to explain Proposition \ref{bridge}} \label{es}
 
\end{center}
\end{figure}

\begin{defin}\label{qpiccolo}
Let  $\{D_{I}\}_{I \in {[n] \choose \geq 2}}$ be a family of positive real numbers.
Let us define
$$Q= \left\{(a,b,c,d)  \textrm{ ordered } 4\textrm{-subset of } [n]   \;| \;  \;\; \begin{array}{l}
D_{a,b} + D_{c,d}
< D_{a,c} + D_{b,d}= D_{a,d} + D_{b,c},  \vspace*{0.1cm} \\
L_{a,b} \cup L_{c,d}=[n]
\end{array}
  \right\}/ \sim,$$ where 
$(a,b,c,d) \sim  (a',b',c',d')$ if and only if
$\{ L_{a,b}, L_{c,d}\}
=\{ L_{a',b'} ,  L_{c',d'}\}$.

Moreover, for any $S \in { [n]  \choose \geq 4}$, 
let $$Q(S)=\{ [a,b,c,d]  \in Q\; | \; S \cap L_{a,b} \neq \emptyset, \; 
 S \cap L_{c,d} \neq \emptyset\}.$$
If $[(a,b,c,d)] \in Q(S)$,  we define:
$$q(S)_{[(a,b,c,d)]} = \sharp (L^{\{a,b,c,d\}}_{\{a,b\}} \cap S) \cdot \sharp (L^{\{a,b,c,d\}}_{\{c,d\}} \cap S).$$
\end{defin}

\begin{rem} \label{internal}
Let ${\cal T} =(T,w) $ be an essential internal-positive-weighted tree with $L(T)=[n]$. Denote  $D_{i,j} ({\cal T})$ by  $D_{i,j} $  for distinct $i,j \in L(T)$.
Observe that, by Proposition \ref{bridge}, the set $Q$ is in bijection with $ \mathring{E}(T)$ and, for any $S \in { [n]  \choose \geq 4}$,  the set $Q(S)$ is in bijection with $ \tilde{E}(T|_S)$.
\end{rem}

\begin{prop}\label{lastprop}
Let ${\cal T} =(T,w) $ be a nonnegative-weighted tree with $L(T)=[n]$.
 Let us denote  $D_{i,j} ({\cal T})$ by  $D_{i,j} $  for distinct $i,j \in L(T)$.
 For any $S \subset [n]$ we have that
\begin{itemize}
\item if $\sharp S = 3$ then 
\begin{equation}\label{eq:primatree}
D_S({\cal T})= \frac{1}{2}\sum_{ \{ i,j \}  \in {S  \choose 2}} D_{i,j};
\end{equation}

\item if $\sharp S \geq 4$ then 
\begin{equation}\label{eq:secondatree}
D_S({\cal T})= \frac{1}{\sharp S -1}\,  \left[ \sum_{  \{ i,j \}  \in {S  \choose 2} } D_{i,j} - \sum_{[(a,b,c,d)] \in Q(S)} \frac{ D_{a,c}+D_{b,d}-D_{a,b}-D_{c,d}}{2} \; \big(q(S)_{[(a,b,c,d)]}+1-\sharp S\big) \right] ,
\end{equation}
\end{itemize}

where $Q(S)$ and $q(S)_{[(a,b,c,d)]}$ are defined as in Definition \ref{qpiccolo}.
\end{prop}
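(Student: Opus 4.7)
The starting observation is that $D_S({\cal T}) = w(T|_S) = \sum_{e \in E(T|_S)} w(e)$, which holds because $T$ is a tree with nonnegative weights, so the minimal connected subgraph of $T$ containing $S$ is $T|_S$ itself. The combinatorial engine is a standard double-counting: for each $e \in E(T|_S)$, removing $e$ from $T|_S$ partitions the leaf set $L(T|_S) = S$ into two parts of sizes $a_e, b_e$ with $a_e + b_e = \sharp S$, and the number of pairs $\{i,j\} \in {S \choose 2}$ whose path in $T$ crosses $e$ is precisely $a_e b_e$. Therefore
\[
\sum_{\{i,j\} \in {S \choose 2}} D_{i,j} \;=\; \sum_{e \in E(T|_S)} a_e b_e \cdot w(e). \qquad (\ast)
\]

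When $\sharp S = 3$, every edge satisfies $a_e b_e = 2$, and $(\ast)$ instantly yields \eqref{eq:primatree}.

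For $\sharp S \geq 4$, I would use the elementary identity $a_e b_e = (\sharp S - 1) + (a_e - 1)(b_e - 1)$ to rewrite $(\ast)$ as
\[
\sum_{\{i,j\} \in {S \choose 2}} D_{i,j} \;=\; (\sharp S - 1)\, D_S({\cal T}) + \sum_{e \in E(T|_S)} (a_e - 1)(b_e - 1)\, w(e).
\]
The factor $(a_e - 1)(b_e - 1)$ vanishes exactly when $e$ lies on a twig of $T|_S$, so only edges in $\mathring{E}(T|_S) \subset \tilde{E}(T|_S)$ contribute (the inclusion holds because a vertex of degree greater than $2$ in $T|_S$ retains degree greater than $2$ in $T$).

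The main obstacle, and the last step, is to rewrite this residual sum as the sum over $Q(S)$ appearing in \eqref{eq:secondatree}. Proposition \ref{bridge} and Remark \ref{internal} perform this translation, but only for essential internal-positive-weighted trees, whereas ${\cal T}$ is merely nonnegative-weighted. I would reduce to that setting by iteratively suppressing every vertex of $T$ of degree $2$ (adding the weights of its two incident edges) and contracting every internal edge of weight $0$, producing an essential internal-positive-weighted tree ${\cal T}' = (T', w')$ with $L(T') = [n]$. These operations preserve all pairwise distances $D_{i,j}$ and all multiweights $D_S$, and they only fuse or delete edges that contribute $0$ to the sum of interest. Applying Remark \ref{internal} to ${\cal T}'$ yields a bijection $Q(S) \leftrightarrow \tilde{E}(T'|_S)$; for $[(a,b,c,d)] \in Q(S)$ corresponding to an edge $e$, Proposition \ref{bridge}(2)(i) gives $w'(e) = (D_{a,c}+D_{b,d}-D_{a,b}-D_{c,d})/2$, and Proposition \ref{bridge}(1) identifies $L_{a,b} \cap S$ and $L_{c,d} \cap S$ with the two parts of $S$ cut by $e$, so $q(S)_{[(a,b,c,d)]} = a_e b_e$ and $q(S)_{[(a,b,c,d)]} + 1 - \sharp S = (a_e - 1)(b_e - 1)$. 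Substituting into the displayed equation and solving for $D_S({\cal T})$ produces \eqref{eq:secondatree}.
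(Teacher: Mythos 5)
Your proof is correct and follows essentially the same route as the paper's: both rest on the decomposition $D_S({\cal T})=w(T|_S)$, the double count of $\sum_{\{i,j\}\in\binom{S}{2}}D_{i,j}$ edge by edge according to how many pairs of $S$ each edge separates, and the translation of internal-edge weights into four-point expressions via Proposition \ref{bridge} and Remark \ref{internal} after reducing to an essential internal-positive-weighted tree. The only difference is bookkeeping: the paper splits off the twig weights and eliminates $\sum_{i\in S}w(e_i)$ between two equations, while you absorb that step into the identity $a_eb_e=(\sharp S-1)+(a_e-1)(b_e-1)$, which is the same arithmetic.
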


\begin{proof} Obviously we can suppose that $T$ is essential and internal-positive-weighted.
If $\sharp S = 3$, the minimal subtree of $T$ containing the elements of $S$ is a star with three leaves, so it is easy to check  (\ref{eq:primatree}). If $\sharp S \geq 4$, for any $i \in S$, call $e_i$ the corresponding twig, which is composed by only one edge because $T$ is essential; by definition we have that 
$$D_{S}({\cal T})= \sum_{i \in S} w(e_i) + \sum_{e \in \tilde{E}(T|_{S}) } w(e);$$ 
thus, by Remark \ref{internal}, 
\begin{equation} \label{br}
D_{S}({\cal T})= \sum_{i \in S} w(e_i) + \sum_{[a,b,c,d] \in Q(S) } \frac{D_{a,c} + D_{b,d} - D_{a,b} -D_{c,d}}{2}.
\end{equation}
Moreover 
$$\sum_{ \{ i,j \}  \in {S  \choose 2}} D_{i,j} = (\sharp S-1)  \sum_{i \in S} w(e_i) 
+  \sum_{[a,b,c,d] \in Q(S) } q(S)_{[a,b,c,d]}  \frac{D_{a,c} + D_{b,d} - D_{a,b} -D_{c,d}}{2} ,$$
because $q|_{[(a,b,c,d)]}$ is the number of the $2$-subsets $\{i,j\}$  of $S$ such that $i$ and $j$ belong to  different connected components of $T \smallsetminus \{e\}$  (that is, $e$ belongs to the path $p(i,j)$ realizing $D_{i,j}$); hence 
\begin{equation} \label{brr} \sum_{i \in S} w(e_i)  = \frac{1}{  (\sharp S-1) }
\sum_{ \{ i,j \}  \in {S  \choose 2}} D_{i,j} 
- \sum_{[a,b,c,d] \in Q(S) }  q(S)_{[a,b,c,d]}  \frac{D_{a,c} + D_{b,d} - D_{a,b} -D_{c,d}}{2   (\sharp S-1) } .
\end{equation}
From (\ref{br}) and (\ref{brr}) we get immediately our statement.
\end{proof}

\begin{thm}\label{thm3}
Let $\{D_I\}_{I \in {[n] \choose \geq 2}}$ be a family of positive real numbers. There exists a nonnegative-weighted tree ${\cal T}=(T,w)$, with $L(T)=[n]$, such that $D_I({\cal T})=D_I$ for any $I \in {[n] \choose \geq 2}$ if and only if the following three conditions hold:

\begin{itemize}
\item[(i)] the family $\{D_I\}_{I \in {[n] \choose 2}}$ satisfies the $4$-point condition;
\item[(ii)] for any $S \in {[n] \choose 3}$,  we have:
\begin{equation}
D_S= \frac{1}{2}\sum_{  \{ i,j \}  \in {S  \choose 2} } D_{i,j};
\end{equation}
\item[(iii)] for any $S \in {[n] \choose \geq 4}$,  we have:
\begin{equation}
D_S= \frac{1}{\sharp S -1}\,  \left[ \sum_{  \{ i,j \}  \in {S  \choose 2} } D_{i,j} - \sum_{[(a,b,c,d)] \in Q(S)} \frac{D_{a,c}+D_{b,d}-D_{a,b}-D_{c,d}}{2}\; \left(q(S)_{[(a,b,c,d)]}+1-\sharp S \right) \right] ,
\end{equation}
where $Q(S)$ and $q(S)_{[(a,b,c,d)]}$ are defined as in Definition \ref{qpiccolo}.
\end{itemize}
\end{thm}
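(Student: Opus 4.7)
My approach is to combine two preparatory results already proved in the paper: Theorem \ref{Bune}, which characterizes p-treelike (equivalently nn-l-treelike) families of $2$-weights, and Proposition \ref{lastprop}, which gives explicit formulas for the higher multiweights of a nonnegative-weighted tree with $L(T)=[n]$ purely in terms of its $2$-weights.

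The forward direction is immediate bookkeeping. Condition $(i)$ is the $4$-point condition supplied by Theorem \ref{Bune} applied to ${\cal T}$, and conditions $(ii)$ and $(iii)$ are, verbatim, formulas \eqref{eq:primatree} and \eqref{eq:secondatree} of Proposition \ref{lastprop} applied to ${\cal T}$.

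The substantive direction is the converse, and the plan is as follows. First, using $(i)$ and Theorem \ref{Bune}, I would produce a nonnegative-weighted tree ${\cal T}=(T,w)$ with $L(T)=[n]$ and $D_{i,j}({\cal T})=D_{i,j}$ for every $\{i,j\}\in{[n]\choose 2}$. It then remains to show $D_S({\cal T})=D_S$ for every $S\in{[n]\choose \geq 3}$. For this, I would apply Proposition \ref{lastprop} to ${\cal T}$: the proposition expresses $D_S({\cal T})$ as the right-hand side of $(ii)$ (if $\sharp S=3$) or $(iii)$ (if $\sharp S\geq 4$), evaluated on the $2$-weights of ${\cal T}$. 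The crucial observation is that the auxiliary data $Q(S)$ and $q(S)_{[(a,b,c,d)]}$ are defined in Definitions \ref{defL} and \ref{qpiccolo} in terms of the $2$-weights alone (via the sets $L^{a,b,c,d}_{a,b}$), so they are the same whether computed from ${\cal T}$ or from the prescribed family $\{D_I\}$. Since the $2$-weights of ${\cal T}$ coincide with the prescribed $D_{i,j}$, the formula outputs $D_S({\cal T})$ on one side and, by the hypotheses $(ii)$ and $(iii)$, the prescribed $D_S$ on the other, giving $D_S({\cal T})=D_S$.

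The one subtlety I foresee is the interface with Theorem \ref{Bune}: I need the nn-l-treelike flavour of realization, so that $L(T)=[n]$ exactly as required both by Proposition \ref{lastprop} and by the present statement, rather than merely a p-treelike realization in which elements of $[n]$ might sit at internal vertices. The passage between the two flavours is standard and already alluded to in the introduction --- attach a pendant edge of weight $0$ at every labelled internal vertex, and suppress any unlabelled degree-$2$ vertices --- and it leaves every multiweight undisturbed, so this is a verification rather than a genuine obstacle. Beyond this, the proof is a direct substitution, with no independent combinatorial argument needed; in particular, no analogue of the uniqueness step in Theorem \ref{thmmedian} is required here.
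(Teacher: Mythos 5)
Your proposal is correct and follows essentially the same route as the paper: the forward direction is Theorem \ref{Bune} plus Proposition \ref{lastprop}, and the converse builds a nonnegative-weighted tree with $L(T)=[n]$ realizing the $2$-weights via Theorem \ref{Bune} and then matches the higher multiweights through Proposition \ref{lastprop} together with hypotheses $(ii)$ and $(iii)$. The two points you flag --- that $Q(S)$ and $q(S)_{[(a,b,c,d)]}$ depend only on the $2$-weights, and that the realization must have $[n]$ exactly as leaf set --- are left implicit in the paper but are correctly resolved by your remarks.
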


\begin{proof}
$\Longrightarrow $
 Condition (i) is satisfied by Theorem \ref{Bune}  and Proposition \ref{lastprop} assures us of (ii) and (iii). 
 
$\Longleftarrow$
Suppose $\{D_I\}$ satisfies conditions (i), (ii) and (iii).  Let ${\cal T}=(T,w) $ be a nonnegative-weighted tree 
 such that $L(T)=[n]$ and $D({\cal T})= D_I $ for every $I  \in {[n] \choose 
 2} $ (it exists by Theorem \ref{Bune}). By Proposition \ref{lastprop} and
 conditions (ii) and  (iii), 
we have that $D_{S}({\cal T})= D_{S}$ for any $S \in {[n] \choose \geq 3}$.
\end{proof}

Since a family of positive real numbers is nn-l-treelike if and only if it is p-treelike, Theorem \ref{thm3} can be reformulated  in the following way:

\begin{thm}\label{thm4}
Let $\{D_I\}_{I \in {[n] \choose \geq 2}}$ be a family of positive real numbers. There exists a positive-weighted tree ${\cal T}=(T,w)$, with $[n] \subset V(T)$, such that $D_I({\cal T})=D_I$ for any $I \in {[n] \choose \geq 2}$ if and only if conditions $(i)$, $(ii)$ and  $(iii)$ of Theorem \ref{thm3} hold.
\end{thm}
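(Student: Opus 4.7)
The plan is to deduce Theorem \ref{thm4} as an immediate reformulation of Theorem \ref{thm3}, exploiting the equivalence (noted in the introduction) between being p-treelike and being nn-l-treelike for families of positive real numbers. Since conditions $(i)$, $(ii)$, $(iii)$ of Theorem \ref{thm3} characterize nn-l-treelike families (with leaf set exactly $[n]$), once we establish that nn-l-treelike $\Longleftrightarrow$ p-treelike, the theorem is immediate.

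First I would justify the two-way conversion between the relevant tree representations. Given a positive-weighted tree ${\cal T}=(T,w)$ with $[n]\subset V(T)$, for each labelled vertex $v \in [n]$ that is not a leaf, I attach a new vertex $v'$ to $v$ by an edge of weight $0$ and transfer the label $v$ to $v'$. The result is a nonnegative-weighted tree ${\cal T}'$ with $[n]\subset L(T')$, and for any $S \in \binom{[n]}{\geq 2}$ one has $D_S({\cal T}')=D_S({\cal T})$: adding zero-weight pendant edges cannot decrease or increase the weight of a minimal connected subgraph spanning $S$. Conversely, given a nonnegative-weighted tree with $[n]\subset L(T)$, contracting every zero-weight edge produces a positive-weighted tree with $[n]\subset V(T)$ realizing the same multiweights, since contraction of a zero-weight edge does not change the weight of any connected subgraph containing both (identified) endpoints in its vertex set. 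Finally, given a nonnegative-weighted tree with $[n]\subset L(T)$, pruning any unlabelled leaves (and their incident edges) does not affect $D_S$ for $S\subset [n]$, so we may assume $L(T)=[n]$.

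For the $\Longrightarrow$ direction of Theorem \ref{thm4}, I start from a positive-weighted tree with $[n]\subset V(T)$ realizing $\{D_I\}$, convert it to a nonnegative-weighted tree with $L(T')=[n]$ realizing the same family via the above two steps (externalize labels, then prune), and invoke Theorem \ref{thm3} to conclude that conditions $(i)$, $(ii)$, $(iii)$ hold. For the $\Longleftarrow$ direction, Theorem \ref{thm3} provides a nonnegative-weighted tree with $L(T)=[n]$ realizing the family; contracting its zero-weight edges yields the required positive-weighted tree with $[n]\subset V(T)$.

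The only nonroutine point is the verification that the externalization and contraction operations preserve $D_S({\cal T})$ for every $S\in\binom{[n]}{\geq 2}$; I expect this to be the main (but minor) obstacle. It follows by comparing minimal connected spanning subgraphs before and after the operation: a minimal subtree of ${\cal T}$ spanning $S$ has a canonical counterpart in ${\cal T}'$ obtained by adding (resp.\ deleting) the zero-weight pendant edges attached to vertices of $S$, and this bijection preserves total weight.
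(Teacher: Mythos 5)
Your proposal is correct and matches the paper's argument: the paper derives Theorem \ref{thm4} from Theorem \ref{thm3} via precisely the same equivalence between p-treelike and nn-l-treelike families, established by attaching zero-weight pendant edges to internal labelled vertices in one direction and contracting or deleting zero-weight edges (and pruning unlabelled leaves) in the other. The weight-preservation checks you flag are exactly the routine verifications the paper relies on.
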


{\small }

\bigskip

{\bf
E-mail addresses:}
baldisser@math.unifi.it, rubei@math.unifi.it


\begin{thebibliography}{Dilloo Dilloo 83}


\bibitem{B-R1} A. Baldisserri, E. Rubei, {\sl A characterization of dissimilarity families of trees}, 
submitted.

\bibitem{B-R2} A. Baldisserri, E. Rubei, {\sl Families of multiweights and pseudostars},  Adv. in Appl. Math., 77  (2016),  86-100


\bibitem{B-R3} A. Baldisserri, E. Rubei, {\sl On graphlike $k$-dissimilarity vectors}, 
Ann. Comb., 18  (2014), no. 3, 356-381 .

\bibitem{B-T} D. Bryant, P. Tupper, {\sl Hyperconvexity and tight span theory for diversities}, 
Advances in Mathematics 231 (2012), no. 6, 3172-3198 

\bibitem{B-T2} D. Bryant, P. Tupper, {\sl Diversities and the geometry of hypergraphs}, Discrete Math.  Theor. Comput. Sci. 16 (2014), no. 2, 1-20

\bibitem{B} P. Buneman, {\em A note on the metric properties
 of trees}. Journal of  Combinatorial Theory Ser. B  17 (1974), 48-50

\bibitem{Dresslibro}  A. Dress, K. T. Huber, J. Koolen, V. Moulton, A. Spillner,
 Basic phylogenetic combinatorics. Cambridge University Press, Cambridge, 2012


\bibitem{H-H-M-S} 
S.Herrmann, K.Huber, V.Moulton, A.Spillner, 
{\em Recognizing treelike k-dissimilarities}.
Journal of Classification 29  (2012),   no. 3, 321-340

\bibitem{H-Y}
S.L. Hakimi, S.S. Yau,
{\em Distance matrix of a graph and its realizability}.
Quart. Appl. Math. 22 (1965), 305-317



\bibitem{H-F} M. Hayamizu, K.Fukumizu {\em On minimum spanning tree-like metric spaces} arXiv: 1505.06145


\bibitem{L-Y-P} D. Levy, R. Yoshida, L. Pachter, {\em Beyond pairwise
distances: neighbor-joining with phylogenetic diversity esitimates},
Mol. Biol. Evol.  23  (2006), no. 3,  491-498 

\bibitem{P-S}  L. Pachter, D. Speyer, {\em Reconstructing
 trees from subtree weights}. Appl. Math. Lett. 17  (2004), no. 6, 615--621



\bibitem{SimP} J.M.S. Simoes Pereira, {\em A Note on the Tree 
Realizability of a distance matrix}. Journal of Combinatorial Theory 6 (1969),
 303-310



\bibitem{SS2}
D. Speyer, B. Sturmfels {\em Tropical mathematics}, 
Math. Mag. 82  (2009), no. 3, 163-173


\bibitem{Za} K.A. Zaretskii, {\em Constructing trees from the set of distances between pendant vertices}, Uspehi Matematiceskih Nauk. 20 (1965), 90-92 

\end{thebibliography}
\end{document}